\def\themax{{\text{\sc Max}}}
\def\area{{\text{\rm a}}}
\def\per{{\text{\rm per}}}
\def\maxrec{{\text{\sc MaxRec}}}
\def\polmax{{\text{\sc PolyMax}}}
\def\real{{\mathbb{R}}}
\def\zz{{\mathbb{Z}}}
\def\rr{{\mathcal{R}}}
\def\ss{{\mathcal{S}}}
\def\tt{{\mathcal{T}}}
\def\PP{{\mathbbm{P}}}
\def\var{{\text{\rm Var}}}
\def\EE{{\mathbbm E}}
\def\tt{{\mathcal T}}
\def\comm#1{{}}
\numberwithin{equation}{section}
\theoremstyle{plain}
\newtheorem{thm}{Theorem}[section]
\newtheorem{prop}[thm]{Proposition}
\theoremstyle{definition}
\begin{document}
\title[Large area convex holes in random point sets]
{Large area convex holes in random point sets}

\author{Octavio Arizmendi${}^{*}$}
\thanks{${}^{*}$Supported by CONACYT Grant 222668. E-mail: \texttt{octavio@cimat.mx}}
\address{Centro de Investigaci\'on en Matem\'aticas. Guanajuato,
  Mexico.}

\author{Gelasio Salazar${}^{\dag}$}
\thanks{${}^{\dag}$Supported by CONACYT Grant 222667. E-mail: \texttt{gsalazar@ifisica.uaslp.mx}}
\address{Instituto de F\'\i sica, UASLP. San Luis Potos\'{\i}, Mexico.}

\maketitle

\begin{abstract}
Let $K, L$ be convex sets in the plane. For normalization purposes, suppose that the area of $K$ is $1$. Suppose that a set $K_n$ of $n$ points are chosen independently and uniformly over $K$, and call a subset of $K$ a {\em hole} if it does not contain any point in $K_n$. It is shown that w.h.p.~the largest area of a hole homothetic to $L$ is $(1+o(1)) \log{n}/n$. We also consider the problems of estimating the largest area convex hole, and the largest area of a convex polygonal hole with vertices in $K_n$. For these two problems we show that the answer is $\Theta\bigl(\log{n}/n\bigr)$.
\end{abstract}

\comm{For probability, use $\PP$}

\section{Introduction}


Let $K$ be a convex set in the plane, and let $K_n$ be a set of $n$ points chosen independently and uniformly at random from $K$.  A $K_n$-{\em hole} (or simply a {\em hole}) is a subset of $K$ whose interior does not contain any point of $K_n$. 

In~\cite{bgs}, Balogh et al.~proved that w.h.p.~the size of the largest (in the number of vertices) polygonal hole with vertices in $K_n$ is~$\Theta(\log{n}/\log{\log{n}})$. 

The present work was motivated by a question raised by Matthew Kahle (private communication to J.~Balogh), who asked about the largest hole in terms of area rather than in terms of the number of vertices. Kahle mentioned two possible variants of these problems: either (i) find the area of the largest disk hole; or (ii) find the area of the largest convex hole.

In this note we investigate these questions. For the first question, we give an asymptotically exact answer. Moreover, we show that the answer is the same for any convex $K$, and it is also independent of the chosen shape of the empty convex region:

\newpage

\begin{thm}
\label{thm:maintheorem1}
Let $K, L$ be convex sets in the plane. Suppose for normalization purposes that the area of $K$ is $1$. Let $K_n$ be a set of $n$ points chosen independently and uniformly at random from $K$. Let $\themax_L(K_n)$ denote the random variable that measures the largest (in terms of area) hole that is homothetic to $L$. Then w.h.p. $${\themax_L}(K_n) = (1+o(1))\frac{\log{n}}{n}.$$
\end{thm}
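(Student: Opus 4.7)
The plan is to prove matching upper and lower bounds $(1\pm\epsilon)\log n/n$, each with probability tending to $1$, for every $\epsilon>0$. The controlling heuristic is that a fixed homothet of $L$ of area $A$ is empty with probability $(1-A)^n\approx e^{-An}$, so $A=\log n/n$ is the critical scale.

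\textbf{Upper bound.} Fix $\epsilon>0$ and set $A=(1+\epsilon/2)\log n/n$. Choose a fixed homothet $L^*$ of $L$ of area $A$, and lay down a grid $G$ inside $K$ of spacing $\delta=c\sqrt{A}$ with $c=c(L,\epsilon)$ a sufficiently small positive constant; then $|G|=O(A^{-1})=O(n/\log n)$. The key geometric claim is that any empty homothet $L_0$ of $L$ of area $A_0\ge(1+\epsilon)\log n/n$ must contain some translate $L^*+g$ with $g\in G$. Writing $L=x_0+L'$ for $L'$ convex containing $0$, the Minkowski difference $L_0\ominus L^*$ is a translate of $(\sqrt{A_0/|L|}-\sqrt{A/|L|})L'$, itself a homothet of $L'$ of diameter $\Omega(\sqrt{A})$, which contains a point of $G$ once $c$ is small enough. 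A union bound over the $O(n/\log n)$ translates, each empty with probability at most $e^{-An}=n^{-(1+\epsilon/2)}$, gives total bad probability $O(n^{-\epsilon/2}/\log n)=o(1)$.

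\textbf{Lower bound.} Set $A'=(1-\epsilon)\log n/n$. Since every planar convex body has strictly positive packing density and $K$ has positive area, for $n$ large we can pack $N\ge c'n/\log n$ pairwise disjoint homothets $L_1,\dots,L_N$ of $L$, each of area $A'$, inside $K$. Let $Y_i$ be the indicator that $L_i$ contains no point of $K_n$. Then $\EE Y_i=(1-A')^n=n^{-(1-\epsilon)}(1+o(1))$, so $\mu:=\EE\sum_i Y_i=\Omega(n^{\epsilon}/\log n)\to\infty$. By disjointness, $\EE[Y_iY_j]=(1-2A')^n\le(1-A')^{2n}=\EE Y_i\,\EE Y_j$, hence $\var(\sum Y_i)\le\mu$; Chebyshev then gives $\PP[\sum Y_i=0]\le 1/\mu\to 0$, so with high probability some $L_i$ is empty.

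The main technical point is the \emph{discretization lemma} in the upper bound: verifying that $L_0\ominus L^*$ really is a scaled homothet of $L$ of the claimed size. This is a clean consequence of the identity $\lambda_0 L'\ominus\lambda^* L'=(\lambda_0-\lambda^*)L'$ for convex $L'$ containing $0$, combined with $\lambda=\sqrt{A/|L|}$. The packing step in the lower bound is elementary but deserves a brief remark, since $L$ need not tile $\real^2$; strict positivity of its packing density is enough.
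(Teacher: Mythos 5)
Your proposal is correct, and while it shares the paper's overall skeleton (a union bound over a discretized family of small templates for the upper bound, a second-moment argument over disjoint candidate holes for the lower bound), you implement both key steps differently. For the lower bound, the paper constructs an exact equal-area partition of $K$ in which a constant fraction of the cells are homothets of $L$ (via Lassak's circumscribed rectangle, Proposition~\ref{prop:lassak}, together with Proposition~\ref{prop:appro}) and then invokes the balls-in-bins computation of Propositions~\ref{binsandballs} and~\ref{prop:minusepsilon}; you instead merely \emph{pack} $\Omega(n/\log n)$ disjoint homothets and note that emptiness events of disjoint sets are negatively correlated, so $\var\bigl(\sum_i Y_i\bigr)\le \EE\sum_i Y_i$ and Chebyshev finishes. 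This bypasses the partition machinery entirely, at the cost of a (standard, and easily supplied via the same circumscribed rectangle) positive packing density fact. For the upper bound, the paper shrinks $L$ to a smooth convex $P$, passes to the inner parallel body at distance $\omega$ from the boundary, and lower-bounds $\omega$ by a perimeter estimate; your Minkowski-difference identity $\lambda_0 L'\ominus\lambda^* L'=(\lambda_0-\lambda^*)L'$ achieves the same discretization more cleanly, since the set of admissible translations of the small template inside a large hole is itself a homothet of $L$ with homothety ratio $\Omega_{\epsilon}\bigl(\sqrt{\log n/n}\bigr)$. One small point of care there: to guarantee that this difference body meets the grid, the relevant quantity is its inradius rather than its diameter (a general convex set of diameter $d$ can miss a grid of spacing $cd$); this is harmless because the difference body is a homothet of the fixed body $L$, so its inradius and diameter are comparable up to a constant depending only on $L$, which your choice $c=c(L,\epsilon)$ already accommodates. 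Both arguments are sound; yours arguably gives a shorter self-contained proof, while the paper's partition and inner-parallel-body lemmas are reused in its other theorems.
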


Regarding the area of the largest convex hole, we give an answer that is asymptotically tight within a factor of $4$:

\begin{thm}
\label{thm:maintheorem2}
Let $K$ be a convex set in the plane with area $1$. Let $K_n$ be a set of $n$ points chosen independently and uniformly at random from $K$. Let $\themax(K_n)$ denote the random variable that measures the largest (in terms of area) convex hole. Then w.h.p. $$(1+o(1))\frac{\log{n}}{n} \le \themax(K_n) \le (4+o(1))\frac{\log{n}}{n}.$$
\end{thm}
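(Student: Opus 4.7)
The lower bound is immediate from Theorem~\ref{thm:maintheorem1}: applying it with any fixed convex $L$ (for instance $L=K$) produces w.h.p.\ a hole homothetic to $L$ of area $(1+o(1))\log n/n$, which is in particular a convex hole, so $\themax(K_n)\ge(1+o(1))\log n/n$.

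For the upper bound my plan is to reduce arbitrary convex holes to a more tractable family via a classical inequality: every planar convex body $C$ contains an inscribed parallelogram of area at least $\area(C)/2$, with equality for triangles. Splitting such a parallelogram along a diagonal yields an inscribed triangle of area at least $\area(C)/4$. Hence every convex hole of area $A$ contains a triangle hole of area at least $A/4$, and it suffices to prove that w.h.p.\ no triangle hole has area exceeding $(1+o(1))\log n/n$.

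Since triangle holes vary in shape and orientation, Theorem~\ref{thm:maintheorem1} cannot be applied with a single $L$. I would instead cover the compact, finite-dimensional space of triangle shapes modulo homothety by an $\epsilon$-net of representatives $L_1,\ldots,L_N$ of size polynomial in $1/\epsilon$, apply Theorem~\ref{thm:maintheorem1} to each $L_i$, and union-bound. Any triangle hole $T$ lies within shape-distance $\epsilon$ of some $L_i$, and so contains a homothet of $L_i$ of area at least $(1-O(\epsilon))\area(T)$; that homothet is itself a hole, so its area is at most $(1+o(1))\log n/n$, yielding $\area(T)\le(1+O(\epsilon))(1+o(1))\log n/n$.

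The main obstacle is executing this net step so that the accumulated losses remain $1+o(1)$ rather than a constant, since a constant loss would degrade the factor~$4$. Choosing $\epsilon=1/\log\log n$ keeps $N$ polylogarithmic---easily absorbed by the superpolynomial decay of the failure probability in Theorem~\ref{thm:maintheorem1}---while making the discretization error $o(1)$. Combining the factor~$4$ from the inscribed-triangle lemma with the $(1+o(1))\log n/n$ bound on triangle holes then yields $\themax(K_n)\le(4+o(1))\log n/n$ w.h.p.
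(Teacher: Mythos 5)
Your lower bound is correct and is exactly the paper's: apply Theorem~\ref{thm:maintheorem1} with any fixed convex $L$. Your upper-bound skeleton (inscribe a simpler shape of comparable area in every convex hole, discretize the resulting family, union-bound) is also the paper's skeleton. The genuine gap is the claim that the triangle shapes can be covered by a net of size polynomial in $1/\epsilon$, hence polylogarithmic in $n$. The triangles you must handle do not come from a fixed compact shape space: a hole of area $\Theta(\log n/n)$ inside a body of diameter $\rho$ can have aspect ratio as large as $\Theta(n/\log n)$, and for a shape of aspect ratio $a$ to contain a homothet of a net representative of at least $(1-\epsilon)$ times its area, their orientations must agree to within $O(\epsilon/a)$. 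Hence you need $\Omega(\epsilon^{-1}n/\log n)$ orientations alone (this is precisely the role of $\theta_0=\Theta(\epsilon\log n/(\rho^2 n))$ in the proof of Proposition~\ref{prop:workhorse}), plus $\Theta(\log n)$ aspect-ratio scales. The union bound over shapes therefore multiplies the failure probability of Theorem~\ref{thm:maintheorem1} --- which is only $O(\epsilon^{-2}n^{-\epsilon})$, not superpolynomially small --- by a factor polynomial in $n$; to make the product $o(1)$ you are forced to raise the threshold area of each representative to at least $(2+o(1))\log n/n$, so the claimed $(1+o(1))$ bound on triangle holes is unobtainable by this route.

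This obstruction is exactly what the paper's proof is organized around. It inscribes a rectangle of at least \emph{half} the area (Lassak's theorem, Proposition~\ref{prop:lassak}) rather than a triangle of a quarter of the area, and then proves the weaker but achievable statement that w.h.p.\ no rectangle hole exceeds $(2+o(1))\log n/n$ (Theorem~\ref{thm:maintheorem4}), via a discretizing family $\rr$ of $O(n^2)$ rectangles of area $(2+\epsilon)\log n/n$ each (Proposition~\ref{prop:workhorse}); the exponent $2$ in $|\rr|=O(n^2)$ is what forces the $2$ in the area, and $2\times 2=4$. If you repair your net to its true polynomial size, the triangle route yields at best ``no triangle hole of area greater than $(2+o(1))\log n/n$'' and hence only $\themax(K_n)\le(8+o(1))\log n/n$, because the inscription step gave away a factor of $4$ rather than $2$. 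To recover the stated bound you should keep the rectangle (or parallelogram): lose a factor of $2$ at the inscription step and the remaining factor of $2$ at the discretization step.
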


A third variant of this problem is to investigate large area polygonal holes (for a related work see~\cite{voids}). In this direction, our estimate is also asymptotically tight within a factor of $4$:

\begin{thm}
\label{thm:maintheorem3}
Let $K$ be a convex set in the plane with area $1$. Let $K_n$ be a set of $n$ points chosen independently and uniformly at random from $K$. Let $\polmax(K_n)$ denote the random variable that measures the largest (in terms of area) convex polygon with vertices in $K_n$. Then w.h.p.
$$(1+o(1))\frac{\log{n}}{n} \le \polmax(K_n) \le (4+o(1))\frac{\log{n}}{n}.$$
\end{thm}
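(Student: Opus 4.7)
The upper bound is immediate from Theorem~\ref{thm:maintheorem2}: every convex polygon with vertices in $K_n$ and no point of $K_n$ in its interior is a convex hole, so $\polmax(K_n) \le \themax(K_n) \le (4+o(1))\log n/n$ w.h.p. For the matching lower bound my plan is a second moment argument on the number of empty triangles spanned by $K_n$ with area in a prescribed window. Fix $\epsilon > 0$, let $A = (1-\epsilon)\log n/n$, choose a small auxiliary $\delta > 0$, and let $N$ be the number of unordered triples $\{p,q,r\} \subset K_n$ whose convex hull is a triangle with area in $[A,(1+\delta)A]$ whose open interior meets no point of $K_n$. Since $N \ge 1$ forces $\polmax(K_n) \ge A$, it suffices to prove $\PP(N>0)\to 1$.

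For the first moment,
\[
\EE[N] = \binom{n}{3}\int_A^{(1+\delta)A}(1-a)^{n-3}\phi(a)\,da,
\]
where $\phi$ is the density of the area of the triangle spanned by three iid uniform points in $K$. A classical geometric-probability fact is that $\phi$ is continuous on $[0,\infty)$ with $\phi(0)=c_K\in(0,\infty)$ depending only on $K$: three iid uniform points in $K$ form a small-area triangle precisely when they are nearly collinear, an event whose probability scales linearly in the area threshold. Consequently $\EE[N]\sim c_K\,\delta\binom{n}{3}A\,e^{-nA}$, which is of order $\delta(\log n)\,n^{1+\epsilon}$ and diverges.

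For the second moment I would decompose $\EE[N^2]$ according to the number $k\in\{0,1,2,3\}$ of vertices shared by the two triples. Pairs with $k\ge 1$ number $O(n^{6-k})$ and a routine estimate (both triangles must lie in the area window and at least one must be empty) shows their total contribution is $o\bigl((\EE N)^2\bigr)$. For disjoint pairs of triples, the dominant contribution comes from geometrically disjoint triangles, whose emptiness events factor and give $(1+o(1))(\EE N)^2$. Overlapping triangle-pairs are rare --- two random triangles of area $\Theta(A)$ in a unit-area body meet with probability $O(A)=O(\log n/n)$ --- and combined with the crude bound $(1-|T_1\cup T_2|)^{n-6}\le e^{-nA}$ they contribute $O(n^{2+\epsilon}\log^3 n)=o((\EE N)^2)$. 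Thus $\EE[N^2]=(1+o(1))(\EE N)^2$, and the Paley--Zygmund inequality yields $\PP(N>0)\to 1$. Since $\epsilon>0$ is arbitrary, the lower bound follows.

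The main technical obstacle is controlling the overlap contribution in the second moment: overlapping pairs, though rare, enjoy a locally larger emptiness probability than geometrically disjoint ones, so both factors must be estimated quantitatively, and the $O(A)$ bound on the overlap probability must be justified carefully (for example via a Minkowski-sum estimate accounting for the fact that triangles of area $A$ can be arbitrarily elongated). The density fact $\phi(0)>0$ is classical and, being a property of the fixed body $K$, plays only a supporting role.
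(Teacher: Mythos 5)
Your upper bound is exactly the paper's (quote Theorem~\ref{thm:maintheorem2}), but your lower bound takes a genuinely different route, and as written it has a gap at the step you yourself flag as the main obstacle: the claim that two independent random triangles of area $\Theta(A)$ meet with probability $O(A)$. This is false for the triangles that dominate your count. The fact that $\phi(0)=c_K>0$ holds precisely because a triple of area $\le a$ is typically a \emph{needle}: two points at distance $\Theta(1)$ and the third within distance $O(a)$ of the line through them. Two independent needles of length $\Theta(1)$ and width $O(A)$, at a relative angle bounded away from $0$, cross with probability $\Theta(1)$ (the Minkowski sum of two unit-length segments at angle $\theta$ has area $\Theta(\sin\theta)$), not $O(A)$. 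Plugging the correct $\Theta(1)$ overlap probability into your own crude bound $(1-|T_1\cup T_2|)^{n-6}\le e^{-nA}=n^{-(1-\epsilon)}$ gives an overlap contribution of order $n^6\cdot A\cdot A\cdot n^{-(1-\epsilon)}=n^{3+\epsilon}\log^2 n$, which swamps $(\EE N)^2=\Theta(n^{2+2\epsilon}(\log n)^{O(1)})$ for every $\epsilon<1$. So the second moment does not close as described. It is rescuable, but not by bounding the probability of intersection: one must instead control the \emph{area} of the intersection, observing that two crossing needles at angle $\theta$ overlap in a parallelogram of area $O(A^2/\sin\theta)$, so that $e^{n|T_1\cap T_2|}=1+o(1)$ except for nearly parallel pairs, which must be handled separately; alternatively, restrict $N$ from the outset to ``fat'' triangles (all sides $\Theta(\sqrt{A})$), for which your $O(A)$ overlap bound is true and the whole computation goes through (the first moment drops to $\Theta(n^{\epsilon}\log^2 n)$ but still diverges). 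A second, minor, slip: since $e^{-na}$ decays by the polynomial factor $n^{\delta(1-\epsilon)}$ across the window $[A,(1+\delta)A]$, the first moment is $\sim c_K\binom{n}{3}e^{-nA}/n=\Theta(n^{1+\epsilon})$ rather than $c_K\delta\binom{n}{3}Ae^{-nA}$; this does not affect divergence.

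For comparison, the paper's lower bound avoids triangles and second moments on geometric configurations entirely. It partitions $K$ (first treated as the unit square) into $t=n/((1-\epsilon)\log n)$ vertical strips of area $(1-\epsilon)\log n/n$, uses Proposition~\ref{prop:minusepsilon} (Chebyshev on the bins-and-balls count) to get $\Omega(n^{\epsilon}/\log n)$ empty strips w.h.p., shows that w.h.p.\ no two empty strips are adjacent, and then for each of $\Omega(n^{\epsilon}/\log n)$ well-separated empty strips looks at the two nearest points of $K_n$ on each side: with probability $\delta^4$ per strip, independently, their $y$-coordinates land near the top and bottom, producing an empty convex quadrilateral of area at least $(1-2\delta)(1-\epsilon)\log n/n$. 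That argument is elementary and already yields a polygon with vertices \emph{in} $K_n$ (which your $N>0$ event also does, via the empty triangle's vertices, so both approaches correctly address $\polmax$ rather than just $\themax$). If you repair the overlap estimate, your route would be a legitimate alternative, but it is substantially more delicate than what the statement requires.
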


Questions about large convex substructures in point sets are of fundamental importance in discrete and computational geometry; many such variants are typically described as ``Erd\H{o}s-Szekeres type problems'', after the seminal paper~\cite{erse}. Problems that ask for empty (usually convex) substructures are particularly natural and important; see for instance~\cite{bf,bv,dumi2,valtr1,valtr2,valtr3}.

From the algorithmical point of view, a well-studied problem in the field is to compute the largest empty rectangle (or, say, the largest axis-parallel $d$-dimensional empty box). See for instance~\cite{agg}. A related result by Dumitrescu and Jiang~\cite{dg1} is an efficient $(1-\epsilon)$ approximation algorithm for computing a maximum-volume empty axis-parallel $d$-di\-men\-sional box contained in an axis-parallel $d$-di\-men\-sional box in $\real^d$. The problem of investigating empty substructures in random point sets seems to have received far less attention; we could only find two related results in the literature, other than the already mentioned~\cite{bgs}. In~\cite{dg2}, Dumitrescu and Jiang found the expected number of maximal empty axis-parallel boxes amidst $n$ random points in the unit hypercube; and, very recently, Fabila-Monroy et al.~found the expected number of empty convex (and also non-convex) four-gons with vertices in a finite set randomly chosen from a convex set~\cite{fhm}.

Although the proofs of our main results stated above are somewhat technical, at their core they rely on the following well-known fact from probability theory. Suppose that we toss $n$ balls into $c\cdot n/\log{n}$ bins. If $c<1$, then w.h.p.~none of the bins will be empty, and if $c>1$, then w.h.p.~many bins will be empty. The proofs are also based on some well-known facts from convexity theory that we shall review in Section~\ref{sec:basic}. 

Theorems~\ref{thm:maintheorem1},~\ref{thm:maintheorem2}, and~\ref{thm:maintheorem3} are proved in Sections~\ref{sec:main1},~\ref{sec:main2}, and~\ref{sec:main3}, respectively. Section~\ref{sec:conrem} contains some concluding remarks.

\comm{For the rest of the paper we omit floors and ceilings}

\section{Basic facts on probability and convex sets}\label{sec:basic}

In this section we gather some basic facts that will be used in the proofs of the three main theorems.

The following is a straightforward exercise in elementary probability theory.

\begin{prop}\label{binsandballs}
Let $k,n\in \mathbb{N}$. Suppose that $n$ balls are thrown independently at random into $k$ bins.  Let $Y$ be the random variable counting the number of bins that are empty. Then
\begin{equation*}
\EE[Y]=k\biggl(1-\frac{1}{k}\biggr)^n \sim ke^{-n/k}, \text{\hglue 0.2 cm and}
\end{equation*}
\begin{equation*}
\var[Y] = k\biggl(1-\frac{1}{k}\biggr)^n+k(k-1)\biggl(1-\frac{2}{k}\biggr)^n - k^2\biggl( 1-\frac{1}{k}  \biggr)^{2n} \sim ke^{-n/k} - ke^{-2n/k}. 
\end{equation*}
\end{prop}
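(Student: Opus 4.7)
The plan is to reduce everything to an indicator-variable calculation. For each bin $i \in \{1, \dots, k\}$, introduce $X_i = \mathbbm{1}[\text{bin } i \text{ is empty}]$, so $Y = \sum_{i=1}^{k} X_i$. Because each ball lands in bin $i$ independently with probability $1/k$, the event that all $n$ balls miss bin $i$ has probability $(1 - 1/k)^n$. Hence $\EE[X_i] = (1-1/k)^n$, and by linearity $\EE[Y] = k(1-1/k)^n$. The asymptotic $\sim k e^{-n/k}$ follows from $\log(1 - 1/k) = -1/k + O(1/k^2)$, giving $(1-1/k)^n = \exp(-n/k + O(n/k^2))$.

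For the variance, I would expand $\EE[Y^2] = \sum_i \EE[X_i^2] + \sum_{i \neq j} \EE[X_i X_j]$. Since $X_i^2 = X_i$, the diagonal sum is $k(1-1/k)^n$. For the off-diagonal terms, observe that $X_i X_j = 1$ iff every one of the $n$ balls avoids \emph{both} bins $i$ and $j$, which each ball does independently with probability $1 - 2/k$. Thus $\EE[X_i X_j] = (1-2/k)^n$, and there are $k(k-1)$ such ordered pairs. Subtracting $\EE[Y]^2 = k^2(1-1/k)^{2n}$ yields the claimed exact formula.

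The main (minor) obstacle is extracting the asymptotic for the variance, since the last two terms are both of order $k^2 e^{-2n/k}$ and must cancel to leading order. To handle this I would use the identity
\begin{equation*}
\frac{(1-2/k)}{(1-1/k)^2} = 1 - \frac{1}{(k-1)^2},
\end{equation*}
so that $(1-2/k)^n = (1-1/k)^{2n}\bigl(1 - 1/(k-1)^2\bigr)^n$. Combining this with $k(k-1) = k^2 - k$ gives
\begin{equation*}
k(k-1)(1-2/k)^n - k^2(1-1/k)^{2n} = -k(1-1/k)^{2n} + k^2(1-1/k)^{2n}\bigl[(1-1/(k-1)^2)^n - 1\bigr] \cdot \tfrac{k-1}{k} ,
\end{equation*}
where the first term is $\sim -k e^{-2n/k}$ and the bracketed correction is negligible in the regime of interest. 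Adding back the first-term contribution $k(1-1/k)^n \sim k e^{-n/k}$ yields $\var[Y] \sim k e^{-n/k} - k e^{-2n/k}$, as claimed. The whole argument is elementary; the only care needed is in how the second-order expansion is carried out to expose the cancellation.
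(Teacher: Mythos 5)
The paper offers no proof of this proposition (it is dismissed as ``a straightforward exercise in elementary probability theory''), and your indicator-variable computation of the first and second moments is exactly the standard argument that is intended; the exact formulas and the algebraic identity $\frac{1-2/k}{(1-1/k)^2}=1-\frac{1}{(k-1)^2}$ all check out. Your caveat about ``the regime of interest'' is the right one to flag: the neglected correction is of order $n e^{-2n/k}$, which is negligible relative to the leading term $k e^{-n/k}$ precisely when $(n/k)e^{-n/k}\to 0$ (as in the paper's application with $n/k=(1-\epsilon)\log n$), though it actually dominates the displayed second term $-k e^{-2n/k}$ when $n\gg k$ --- an imprecision already present in the proposition's statement, not introduced by you.
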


In the proofs of Theorems~\ref{thm:maintheorem1},~\ref{thm:maintheorem2}, and~\ref{thm:maintheorem3}, we rely heavily on the following statement.

\begin{prop}\label{prop:minusepsilon}
Let $K$ be a region in the plane of area $1$. Suppose that $n$ points
are chosen independently and uniformly at random from $K$. Let
$\epsilon \in (0,1)$. Suppose that $K$ is partitioned into $t:=n/((1-\epsilon)\log{n})$ equal area regions. Then the probability that fewer than $n^{\epsilon}/(2(1-\epsilon)\log{n})$ of these regions are empty is at most $\sim \frac{4(1-\epsilon) \log(n)}{n^{\epsilon}}$.
\end{prop}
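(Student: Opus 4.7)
The plan is to reduce this to the balls-and-bins setup of Proposition~\ref{binsandballs} and then obtain a lower tail bound on the number of empty regions via Chebyshev's inequality.

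First I would note that since each of the $n$ points is chosen uniformly on $K$, and the $t$ partitioning regions all have the same area $1/t$, each point independently lands in any given region with probability $1/t$. Therefore the random variable $Y$ that counts the number of empty regions is exactly the one described in Proposition~\ref{binsandballs} with $k=t=n/((1-\epsilon)\log n)$. Substituting $n/k = (1-\epsilon)\log n$ into the formulas of that proposition gives
\[
\EE[Y] \sim t\, e^{-(1-\epsilon)\log n} = \frac{n^{\epsilon}}{(1-\epsilon)\log n}.
\]
The key observation is that the threshold $n^{\epsilon}/(2(1-\epsilon)\log n)$ appearing in the statement is precisely $(1+o(1))\,\EE[Y]/2$, so the event in question is essentially $\{Y < \EE[Y]/2\}$.

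Next I would control the variance. From Proposition~\ref{binsandballs}, $\var[Y] \sim t e^{-n/k} - t e^{-2n/k}$. The second term equals $t\, n^{-2(1-\epsilon)}$, which is smaller than the first by a factor of $n^{-(1-\epsilon)} = o(1)$. Thus $\var[Y] \sim \EE[Y]$, which is the standard Poissonian behaviour one expects in this regime.

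Finally I would apply Chebyshev's inequality: since $\var[Y] \sim \EE[Y]$,
\[
\PP\bigl(Y < \tfrac{1}{2}\EE[Y]\bigr) \le \PP\bigl(|Y - \EE[Y]| > \tfrac{1}{2}\EE[Y]\bigr) \le \frac{4\,\var[Y]}{\EE[Y]^{2}} \sim \frac{4}{\EE[Y]} = \frac{4(1-\epsilon)\log n}{n^{\epsilon}},
\]
which is the claimed bound. Absorbing the $(1+o(1))$ factor between the true threshold $n^{\epsilon}/(2(1-\epsilon)\log n)$ and $\EE[Y]/2$ into the asymptotic $\sim$ on the right is routine.

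No step here looks seriously difficult; the only real care needed is the bookkeeping in showing that the contribution $t e^{-2n/k}$ to the variance is negligible compared to $t e^{-n/k}$ in the relevant range $\epsilon \in (0,1)$, and matching the stated threshold to $\EE[Y]/2$ up to a $(1+o(1))$ factor so that the Chebyshev bound transfers cleanly.
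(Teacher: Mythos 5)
Your proposal is correct and follows essentially the same route as the paper: identify the empty-region count with the balls-and-bins variable of Proposition~\ref{binsandballs}, compute $\EE[Y]\sim n^{\epsilon}/((1-\epsilon)\log n)$, observe that the variance is dominated by the expectation, and apply Chebyshev to the event $|Y-\EE[Y]|>\EE[Y]/2$. Your extra remarks on the negligibility of the $te^{-2n/k}$ term and the $(1+o(1))$ matching of the threshold are just slightly more explicit versions of the paper's one-line assertions.
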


\begin{proof}
Let $X$ be the random variable counting the number of empty regions. By Proposition~\ref{binsandballs} we have $$\EE(X)\sim \frac{n}{(1-\epsilon)\log{n}}  e^{-(1-\epsilon) \log(n)}=\frac{n^{\epsilon}}{(1-\epsilon) \log(n)}.$$  Moreover, also by Proposition~\ref{binsandballs} we have $\var(X)< \EE(X)$, and so it follows from Chebyshev's inequality that $$P\biggl(|(X-\EE(X))|>\EE(X)/2 \biggr)\leq \frac{4\var(X)}{(\EE(X))^2} < \frac{4}{\EE(X)} \sim \frac{4(1-\epsilon) \log(n)}{n^{\epsilon}}.$$ 
\end{proof}

The following fact on approximating convex sets by rectangles was proved by Lassak in~\cite{lassak}. In this statement, and in what follows, we let $\area(K)$ denote the area of a set $K$.

\begin{prop}\label{prop:lassak}
Let $L$ be a convex body in the plane. We can inscribe a rectangle $S$ in $L$ such that a homothetic copy $R$ of $S$ is circumscribed about $L$. The positive homothety ratio is at most $2$ and $\frac{1}{2}\area(R) \le \area(L) \le 2\area(S)$.
\end{prop}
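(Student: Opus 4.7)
The plan is to take $R$ to be an axis-aligned rectangle circumscribing $L$ in a specific minimum-width orientation, and to construct $S$ inside $L$ with sides parallel to those of $R$ and linear dimensions exactly half those of $R$; the positive homothety ratio is then exactly $2$. More precisely, let $\omega$ denote the minimum width of $L$, achieved by two parallel support lines of $L$, which after a rotation and translation we take to be horizontal. Let $\ell$ denote the horizontal extent of $L$, so that $L \subseteq [0,\ell] \times [0,\omega] =: R$. Then $L$ touches each of the four sides of $R$; let $P_S, P_E, P_N, P_W$ denote touch points on the south, east, north, and west sides, respectively. A characteristic property of the minimum-width direction is that $P_S = (x_0, 0)$ and $P_N = (x_0, \omega)$ can be chosen to share the same $x$-coordinate (otherwise a small tilt of the direction decreases the width, contradicting minimality). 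The quadrilateral $Q$ with vertices $P_S, P_E, P_N, P_W$ lies in $L$ by convexity, and the Shoelace formula, using $x_{P_S} = x_{P_N}$, yields $\area(Q) = \tfrac{1}{2}\ell\omega$. Hence $\area(L) \geq \area(Q) = \tfrac{1}{2}\area(R)$, which is the first inequality of the proposition.

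Next I would construct the inscribed rectangle $S$ of dimensions $\tfrac{\ell}{2} \times \tfrac{\omega}{2}$. The inequality $\area(L) \geq \tfrac{1}{2}\ell\omega$ implies, by averaging horizontal chord-lengths over $y \in [0,\omega]$, that some horizontal chord of $L$ has length at least $\ell/2$; a symmetric statement gives a vertical chord of length at least $\omega/2$. Using that the left boundary of $L$ is a convex function of $y$ and the right boundary is a concave function of $y$, one can pin down a position at which such ``good'' chords coexist, placing inside $L$ an axis-aligned $\tfrac{\ell}{2} \times \tfrac{\omega}{2}$ rectangle $S$. Since $R$ and $S$ are homothetic with linear ratio $2$, we have $\area(R) = 4\area(S)$, and combining with $\area(R) \leq 2\area(L)$ from the previous step gives $\area(L) \leq 2\area(S)$, the second inequality.

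The delicate part will be the construction of $S$: going from pointwise information about cross-section lengths to a full inscribed rectangle is not automatic, because ``good'' horizontal chords at different heights may fail to line up along a common $x$-range. The argument must blend the convex/concave behaviour of the boundary of $L$ with the lower bound on $\area(L)$ to guarantee that suitable chords overlap enough to accommodate the target rectangle. A cleaner though less explicit fallback is a compactness argument, taking $S$ to be the maximum-area axis-aligned rectangle homothetic to $R$ inside $L$ and using the area bound to deduce that its linear dimensions are at least half those of $R$.
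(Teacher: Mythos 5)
This proposition is quoted from Lassak's paper \cite{lassak}; the paper under review gives no proof of it, so the only question is whether your argument stands on its own. It does not: there is a genuine gap, and it is located exactly where you yourself flag the ``delicate part.'' The decisive problem is that you fix the homothety ratio to be exactly $2$. If $S\subseteq L\subseteq R$ and $R$ is a homothet of $S$ with ratio exactly $2$, then $\area(R)=4\area(S)$, and the two asserted inequalities $\frac{1}{2}\area(R)\le\area(L)$ and $\area(L)\le 2\area(S)=\frac{1}{2}\area(R)$ would force $\area(L)=\frac{1}{2}\area(R)$ exactly. This is false for most convex bodies: for a disk of radius $r$ your construction gives $R$ a square of side $2r$ and $S$ a square of side $r$, so $2\area(S)=2r^2<\pi r^2=\area(L)$, and the inequality $\area(L)\le 2\area(S)$ fails. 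This is why Lassak's statement says the ratio is \emph{at most} $2$: for ``round'' bodies one must inscribe a rectangle with linear dimensions strictly larger than half those of the bounding box (for the disk, side $r\sqrt{2}$, giving ratio $\sqrt2$), and producing such a larger $S$ is the real content of the theorem, not something recoverable from the half-dimensions construction.

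There is also an outright algebraic reversal at the end: from $\area(R)=4\area(S)$ and $\area(R)\le 2\area(L)$ you can only conclude $2\area(S)\le\area(L)$, which is the opposite of the inequality $\area(L)\le 2\area(S)$ you need. (From $L\subseteq R$ you get $\area(L)\le\area(R)=4\area(S)$, i.e.\ a factor $4$, not $2$.) Your compactness fallback has the same defect: knowing that a maximal inscribed homothet of $R$ has linear dimensions at least half those of $R$ again yields only $\area(L)\le 4\area(S)$. The first half of your argument --- the minimal-width bounding box, the vertically aligned touch points $P_S,P_N$ coming from the double normal at the minimal width, and the quadrilateral computation giving $\area(L)\ge\frac{1}{2}\area(R)$ --- is sound, but it proves only the first inequality of the proposition. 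For the second you would need Lassak's actual construction (or another argument producing an inscribed rectangle of area at least $\frac{1}{2}\area(L)$ whose ratio-$\le 2$ homothet circumscribes $L$); as written, the proof cannot be completed along the proposed lines.
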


The following is a straightforward exercise in convex geometry.

\begin{prop}\label{prop:appro}
Let $K$ be a convex set in the plane, and let $R$ be a rectangle. Then there is an integer $M$ such that for all $m \ge M$ there is a partition of $K$ into $m$ equal area regions such that at least $2/3$ of the regions are homothetic to $R$.
\end{prop}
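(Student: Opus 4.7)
The plan is to overlay a tiling of the plane by translates of a suitably scaled homothet of $R$ on top of $K$, use the tiles that sit entirely inside $K$ as the ``good'' partition cells, and split the residual region near $\partial K$ into a few extra pieces via a sweeping line. Concretely, let $A = \area(K)$, let $R_m$ denote the unique positive homothet of $R$ with $\area(R_m) = A/m$, tile the plane by translates of $R_m$ in the natural grid pattern (with the same orientation as $R$), and let $k = k(m)$ be the number of such tiles entirely contained in $K$. These $k$ tiles will form $k$ partition cells of $K$, each of area $A/m$ and each homothetic to $R$.

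The core estimate is to show $k \geq 2m/3$ for all sufficiently large $m$. Let $B := K \setminus \bigcup \{\text{tiles contained in } K\}$. Every tile contributing to $B$ meets $\partial K$, so each of its points lies within its diameter $d_m = O(\sqrt{A/m})$ of $\partial K$. Hence $B$ is contained in the inner collar $\{x \in K : \operatorname{dist}(x, \partial K) \leq d_m\}$. Since $K$ is a bounded convex set, Steiner-type estimates give that this collar has area at most $d_m \cdot P(K) = O(1/\sqrt{m})$, where $P(K)$ is the (finite) perimeter of $K$. Therefore $(m-k)/m = \area(B)/A = O(1/\sqrt{m}) \to 0$, and one can choose $M$ so that $m - k \leq m/3$ for all $m \geq M$, giving $k \geq 2m/3$.

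It remains to split $B$ into $m - k$ regions each of area $A/m$. This is routine: the function $x \mapsto \area(B \cap \{x' \leq x\})$ is continuous and monotone nondecreasing from $0$ to $\area(B) = (m-k) A/m$, so the intermediate value theorem produces vertical cut lines $x_1 < \cdots < x_{m-k-1}$ that slice $B$ into $m - k$ pieces of area exactly $A/m$. Combined with the $k$ tile-shaped cells, this gives a partition of $K$ into $m$ equal-area regions, of which at least $k \geq 2m/3$ are homothetic to $R$, as required.

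The main obstacle is the boundary estimate in the second step: one needs that the inner $\epsilon$-collar of a planar convex body has area $O(\epsilon \cdot P(K))$, which follows from Steiner's formula (or, more elementarily, from the fact that the inner parallel body $K_{-\epsilon}$ has area at least $\area(K) - \epsilon \cdot P(K)$ for small $\epsilon$). Once this is in hand, the rest of the argument is essentially bookkeeping, and the constant $2/3$ in the statement has no special significance: the same proof gives any fraction strictly less than $1$.
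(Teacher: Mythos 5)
The paper offers no proof of this proposition at all --- it is dismissed as ``a straightforward exercise in convex geometry'' --- so there is nothing to compare against except the argument the authors presumably had in mind, and yours is exactly that argument: tile the plane by translates of the homothet of $R$ of area $\area(K)/m$, keep the tiles inside $K$, bound the leftover by an inner boundary collar, and slice the leftover into equal-area pieces. Your proof is correct. Two small points worth making explicit if you write it up: first, a convex set of finite positive area is automatically bounded and hence has finite perimeter, so $P(K)<\infty$ is legitimate; second, the pieces of $B$ produced by the vertical cuts need not be connected, but this is harmless here, since the only property of the partition used in the paper (via Proposition~\ref{binsandballs} and Proposition~\ref{prop:minusepsilon}) is that the cells are measurable sets of equal area. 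Your observation that $2/3$ can be replaced by any constant less than $1$ is also correct, and is consistent with how the proposition is used in Section~\ref{sec:main1}.
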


\section{Proof of Theorem~\ref{thm:maintheorem1}}\label{sec:main1}

\comm{\textcolor{red}{Say something about the Lebesgue covering number.}}

Let $K, L$ be convex sets in the plane, where $K$ has area $1$ and $L$ is convex. Let $K_n$ be a set of $n$ points chosen independently and uniformly at random from $K$.

Theorem~\ref{thm:maintheorem1} is an immediate consequence of the following two statements:

\begin{enumerate}

\item Fix any $\epsilon > 0$. Then w.h.p.~there exists a hole homothetic to $L$ with area at least $(1-\epsilon)\log{n}/n$.

\item Fix any $\epsilon > 0$. Then w.h.p.~there is no hole homothetic to $L$ with area greater than $(1+3\epsilon)\log{n}/n$.

\end{enumerate}

\medskip
\noindent{\em Proof of (1).}
\medskip
\medskip

\def\conone{{\alpha_1}}
\def\contwo{{\alpha_2}}

We start by using Proposition~\ref{prop:lassak} to find a rectangle $R$ that circumbscribes $L$, and such that $\area(R) \le 2 \area(L)$. Now we invoke Proposition~\ref{prop:appro} (we may assume that $n$ is sufficiently large) to partition $K$ into $2n/((1-\epsilon)\log{n})$ equal area regions, at least $2/3$ of which are homothetic to $R$. Next, we partition each of the regions homothetic to $R$ into two equal area parts, one of which is homothetic to $L$; finally, we partition the part of $K$ not already covered, into regions of area $n/((1-\epsilon)\log{n})$. The result is a partition of $K$ into a collection of $n/((1-\epsilon)\log{n})$ {\em regions} of equal area, at least $1/3$ of which are homothetic to $L$. 

By Proposition~\ref{prop:minusepsilon}, w.h.p.~there are at least $\frac{n^{\epsilon}}{2(1-\epsilon) \log(n)}$ empty regions. Since each empty region is homothetic to $L$ with probability at least $1/3$, (1) follows.

\medskip
\noindent{\em Proof of (2).}
\medskip
\medskip

We may assume that $L$ has area $(1+3\epsilon)\log{n}/n$, so that the aim is to show that w.h.p.~there is no empty translate of $L$. Let $P$ be a convex set contained in $L$, whose boundary is a smooth curve, and such that $\area(P)= (1+2\epsilon)\log{n}/n$. By Proposition~\ref{prop:lassak} there is a rectangle $Q$ of area $2\area(P)$ that contains $P$. By performing an affine transformation on the plane, if necessary, we may assume that $Q$ is a square whose sides are parallel to the Cartesian axes. Let $\tt$ denote the set of all translates of $P$. To prove (2) it suffices to show that w.h.p.~no element of $\tt$ contained in $K$ is empty. 

We let $s(T)$ be the convex set contained in $T$, defined by the following properties: (i) $s(T)$ has area $(1+\epsilon)\log{n}/n$; and (ii) there is an $\omega>0$ such that $s(T)$ consists of those points whose distance to the boundary of $T$ is at least $\omega$. We let $\ss:=\{s(T) \bigl| T\in\tt\}$. Note that the map $s$ is invertible: for each $S\in\ss$, we let $s^{-1}(S)$ denote the $T\in\tt$ such that $s(T)=S$. 

For each $T\in\tt$, we let $c(T)$ denote the center of mass of $T$. For each $S\in\ss$, we let $c(S)$ denote $c(s^{-1}(S))$. Note that if $S\in\ss$ then $c(S)$ is not necessarily the center mass of $S$.

We claim that $\omega>(\epsilon/8)\sqrt{\log{n}/n}$. First note that the sides of $Q$ have length $\sqrt{(2+4\epsilon)\log{n}/n}$, and so the perimeter $\per(Q)$ of $Q$ is $4\sqrt{(2+4\epsilon)\log{n}/n}$. Since $P$ is a convex set contained in $Q$, then the perimeter $\per(P)$ of $P$ is also at most $4\sqrt{(2+4\epsilon)\log{n}/n}$. Now $\epsilon\log{n}/n=\area(P\setminus s(P))\le  \per(P) \omega$, and so $\omega \ge (\epsilon\log{n}/n)/(4\sqrt{(2+4\epsilon)\log{n}/n})>(\epsilon/8)\sqrt{\log{n}/n}$ (for all sufficiently small $\epsilon$), as claimed. 

Let $H:= \{ \bigl(i \omega,j \omega\bigr) \bigl| i,j\in \zz  \} \cap K$, and let $\ss_H:=\{ S\in\ss \bigl| c(S) \in H \text{\hglue 0.2 cm and \hglue 0.1 cm} S\subseteq K \}$.

Recall that to prove (2) it suffices to show that w.h.p.~no element of $\tt$ contained in $K$ is empty. Therefore to finish the proof of (2) it suffices to prove that:

\medskip

\noindent (A) Every element of $\tt$ contained in $K$ contains an element in $\ss_H$; and\\
\noindent (B) W.h.p.~no element of $\ss_H$ is empty.
\medskip

Consider any $T\in\tt$ contained in $K$. Since every point in $K$ is at distance less than $\omega$ from a point in $H$, it follows that there exists a point $p$ in $H$ that is at distance less than $\omega$ from $c(T)$. It follows from the triangle inequality that the element $S\in\ss_H$ such that $c(S)=p$ is contained in $T$. This proves (A).

To prove (B), we start by bounding $|\ss_H|$. Let $S\in\ss_H$, and let $c(S)=(x,y)$. We define $\square(S)$ to be the square with vertices $(x,y), (x+\omega,y), (x,y+\omega), (x+\omega,y+\omega)$. If $S,S'\in\ss_H$, then the interiors of $\square(S)$ and $\square(S')$ are disjoint. Since the area of each such square is $\omega^2$, it follows that there are at most $\area(K)/\omega^2=1/\omega^2\le (64/\epsilon^2)({n/\log{n}})$ such squares. Since the map that sends each $S\in\ss_H$ to $\square(S)$ is an injection, it follows that $|\ss_H|\le (64/\epsilon^2)({n/\log{n}})$. 

Fix any $S\in\ss_H$. The area of $S$ is $(1+\epsilon)\log{n}/n$, and so the probability that $S$ is empty is $(1-(1+\epsilon)\log{n}/n))^n\sim {n^{-(1+\epsilon)}}$. By the union bound, the probability that there is an empty element in $\ss_H$ is at most $|\ss_H|\cdot n^{-(1+\epsilon)}\le(64/\epsilon^2)({n/\log{n}})\cdot {n^{-(1+\epsilon)}}=o(1)$. Thus (B) follows.

\section{Proof of Theorem~\ref{thm:maintheorem2}}\label{sec:main2}

The lower bound of Theorem~\ref{thm:maintheorem2} is an immediate consequence of Theorem~\ref{thm:maintheorem1}. Thus it remains to prove the upper bound.

By Proposition~\ref{prop:lassak}, every convex set contains a rectangle of half its area. Thus in order to prove Theorem~\ref{thm:maintheorem2} it suffices to prove the following:

\begin{thm}
\label{thm:maintheorem4}
Let $K$ be a convex set in the plane with area $1$. Let $K_n$ be a set of $n$ points chosen independently and uniformly at random from $K$. Let $\maxrec(K_n)$ denote the random variable that measures the largest (in terms of area) empty rectangle contained in $K$. Then w.h.p. $$\maxrec(K_n) \le (2+o(1))\frac{\log{n}}{n}.$$
\end{thm}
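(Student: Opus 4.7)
My plan is to follow the strategy used in part (2) of the proof of Theorem~\ref{thm:maintheorem1}: build a finite family $\ff$ of ``witness'' rectangles, each of area slightly less than the forbidden bound, such that every empty rectangle $R\subset K$ of area at least $(2+3\epsilon)\log n/n$ contains a witness; then apply the union bound. After assuming that some $R\subset K$ of area exactly $A:=(2+3\epsilon)\log n/n$ is empty (any larger empty rectangle contains one of area $A$), I parametrize each rectangle of area $A$ by its center $c$, its angle $\theta\in[0,\pi)$, and its aspect ratio $\rho=a/b\ge 1$, so that $a=\sqrt{A\rho}$, $b=\sqrt{A/\rho}$, and $\rho\in[1,D^2/A]$ where $D$ is the diameter of $K$.

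The discretization has three layers. First, aspect ratios $\rho_i:=(1+\delta)^i$ on a multiplicative grid, with $\delta=\delta(\epsilon)$ a small constant. Second, for each $\rho_i$, evenly spaced angles at angular step $\alpha_i\sim \epsilon/\rho_i$. Third, and most importantly, for each $(\rho_i,\theta_j)$ an \emph{anisotropic} center grid with spacing $\omega_l\sim \epsilon\sqrt{A\rho_i}$ along $\theta_j$ and $\omega_s\sim \epsilon\sqrt{A/\rho_i}$ perpendicular to it. The anisotropy is essential: a thin rectangle tolerates translation of order $\epsilon a$ along its long axis without losing more than an $\epsilon$-fraction of its area, so the grid spacing in that direction should be proportional to $a$ and not to $b$. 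A routine verification shows that with these choices, every rectangle of area $A$ in $K$ contains a witness $R^*\in\ff$ of area at least $(2+\epsilon)\log n/n$: rounding $R$'s parameters to the nearest grid values and then shrinking the side lengths by an $O(\epsilon)$ factor absorbs the perturbations from rotation, aspect-ratio change, and anisotropic center shift.

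The count is the heart of the argument. For each $\rho_i$ there are $\pi/\alpha_i\sim \rho_i/\epsilon$ angles, and for each $(\rho_i,\theta_j)$ there are $\area(K)/(\omega_l\omega_s)=O(1/(\epsilon^2 A))$ centers. Summing over $\rho_i$ on the multiplicative grid, the geometric series gives $\sum_i \rho_i=O(\rho_{\max}/\delta)=O(1/A)$, so $|\ff|=O(1/A^2)=O(n^2/\log^2 n)$. Each witness has area at least $(2+\epsilon)\log n/n$, so the probability it is empty is at most $(1-(2+\epsilon)\log n/n)^n\sim n^{-(2+\epsilon)}$, and the union bound yields
$$\PP[\,\exists\, R^*\in\ff\text{ empty}\,]\le |\ff|\cdot n^{-(2+\epsilon)}=O\bigl(n^{-\epsilon}/\log^2 n\bigr)=o(1),$$
which finishes the proof since every empty rectangle of area $\ge A$ would contain an empty witness.

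The main obstacle is arranging the anisotropic discretization so the containment step actually works with the claimed count. A naive isotropic grid of spacing $\sim\sqrt{A}$ in both directions would yield $|\ff|=\Omega(n^3/\log^3 n)$, which is too many witnesses against a probability budget of $n^{-(2+\epsilon)}$ per witness; scaling the grid spacing with the rectangle's own side lengths is what reduces $|\ff|$ to $O(n^2/\log^2 n)$. Distributing the area-loss budget correctly among the three sources of perturbation (rotation, aspect-ratio rounding, anisotropic translation) and checking the containment inequality is technical but straightforward.
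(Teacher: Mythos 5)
Your proposal is correct and follows essentially the same route as the paper: the paper's Proposition~\ref{prop:workhorse} constructs exactly such a witness family $\rr$ of $O(n^2)$ rectangles of area $(2+\epsilon)\log n/n$ via a three-layer discretization (a multiplicative grid of $O(\log n)$ widths, a grid of $O(n/\log n)$ inclinations, and an anisotropic center grid with spacings proportional to the rectangle's own side lengths), and then applies the union bound just as you do. The only cosmetic difference is that you let the angular step scale with the aspect ratio while the paper uses a single worst-case angular step $\theta_0=\Theta(\log n/n)$ for all widths; both yield the $O(n^2)$ count that the budget $n^{-(2+\epsilon)}$ per witness requires.
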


The main tool to prove Theorem~\ref{thm:maintheorem4} is the following.

\begin{prop}\label{prop:workhorse}
Let $K$ be a convex set in the plane with area $1$. Let $K_n$ be a set of $n$ points chosen independently and uniformly at random from $K$. Let $\epsilon > 0$ be given. Then there exists a family $\rr$ of $O(n^2)$ rectangles, each of area $(2+\epsilon)\log{n}/n$, with the following property: every rectangle of area $(2+4\epsilon)\log{n}/n$ contained in $K$ contains a rectangle in $\rr$.
\end{prop}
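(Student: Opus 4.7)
The plan is to construct $\rr$ by discretizing the family of rectangles of fixed area $A:=(2+\epsilon)\log n/n$ (four parameters: center, orientation, one side length), with spacings tuned so that any rectangle $R'$ of the larger area $A':=(2+4\epsilon)\log n/n$ in $K$ contains some $R\in\rr$ obtained by a small rotation and translation.

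First I would stratify by aspect ratio. Let $D$ denote the diameter of $K$, and fix the geometric progression of short-side lengths $b_i:=\sqrt{A}\,(1-\epsilon/10)^i$, $i=0,1,\ldots,I$, with $I=O(\log n/\epsilon)$ chosen so that $a_I:=A/b_I\le D$. For each aspect $(a_i,b_i)$, take a grid of $N_i=O(a_i/(\epsilon b_i))$ equally spaced orientations in $[0,\pi)$. For each (aspect, angle), place candidate rectangles on a translational lattice with spacings $\epsilon a_i/C$ and $\epsilon b_i/C$ (for a suitable absolute constant $C$) along the long and short axes of the rectangle respectively, keeping only those contained in $K$. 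This gives $O(\area(K)/(\epsilon^2 A))=O(n/(\epsilon^2\log n))$ centers per (aspect, angle).

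To verify containment, let $R'$ have dimensions $a'\times b'$, orientation $\theta'$, center $c'$. Since $A'/A\ge 1+\epsilon/2$ for $\epsilon$ small, one can pick $i$ with $b_i\in(b'(1-\epsilon/5),\,b'(1-\epsilon/10))$, which forces $a_i\le a'(1-\epsilon/10)$ as well; then pick the nearest grid orientation $\theta$ to $\theta'$ for this aspect, so $|\theta-\theta'|\le\delta_\theta=O(\epsilon b_i/a_i)$. Working in a frame where $R'$ is axis-aligned, the axis-aligned bounding box of $R$ rotated by $\delta_\theta$ has dimensions $a_i\cos\delta_\theta+b_i\sin\delta_\theta$ and $a_i\sin\delta_\theta+b_i\cos\delta_\theta$, and the slacks $a'-a_i=\Theta(\epsilon a')$ and $b'-b_i=\Theta(\epsilon b')$ comfortably absorb both this rotational expansion and the translational offset between $c'$ and the nearest grid center.

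For the count, the number of rectangles at aspect $i$ is $N_i\cdot O(n/(\epsilon^2\log n))=O(a_i^2/(\epsilon^3 A^2))$; summing over the geometric progression in $a_i$ (dominated by $a_i=\Theta(D)$) gives $O(D^2/(\epsilon^4 A^2))=O(n^2)$, as required. The main obstacle is the aspect-dependent coupling between the orientation and translation grids: thinner rectangles demand finer orientation spacing, since rotating a long rectangle swings its ends by a distance comparable to the short-side slack. Using a single orientation grid fine enough for the thinnest aspect would blow up the count; stratifying by aspect so that each sub-grid is tuned to its own thinness, and then summing a geometric series, is what keeps the total at $O(n^2)$.
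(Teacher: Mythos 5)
Your construction is essentially the paper's own proof: both discretize the four\-/parameter family of rectangles via a geometric progression of aspect classes, an orientation grid, and a per\-/class translation lattice, and verify containment by absorbing the rotational and translational offsets into the $\Theta(\epsilon)$ relative slack in each side length, then sum the counts to get $O(n^2)$. The only substantive difference is that you tune the number of orientations to each aspect class ($O(a_i/(\epsilon b_i))$ angles) while the paper uses a single uniform angular grid of spacing $\Theta(\epsilon\log n/n)$ for all classes (both suffice); your one loose end is the harmless off\-/by\-/one at the square end of the progression, since the short side of the larger rectangle can slightly exceed $b_0=\sqrt{A}$, so the progression should start one step higher, exactly as the paper does by letting its index $m$ run from $-1$.
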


The proof of Proposition~\ref{prop:workhorse}, although not difficult, is somewhat technical. We defer it for the moment, and show how Theorem~\ref{thm:maintheorem4} (and thus Theorem~\ref{thm:maintheorem2}) follows from it.

\begin{proof}[Proof of Theorem~\ref{thm:maintheorem4}]
Let $\rr$ be as in the statement of Proposition~\ref{prop:workhorse}. The probability that a fixed $R\in\rr$ is empty is $(1-\area(R))^n=(1-(2+\epsilon)\log{n}/n))^n\sim n^{-(2+\epsilon)}$. Thus it follows from the union bound that the probability that some $R\in\rr$ is empty is at most $O(n^2)\cdot n^{-(2+\epsilon)}=O(n^{-\epsilon})$. Now every rectangle of area $(2+4\epsilon)\log{n}/n$ contained in $K$ contains a rectangle in $\rr$, and so the probability that there is an empty rectangle of area $(2+4\epsilon)\log{n}/n$ is also $O(n^{-\epsilon})$.
\end{proof}

We devote the rest of the section to the proof of Proposition~\ref{prop:workhorse}.


\begin{proof}[Proof of Proposition~\ref{prop:workhorse}]
Let $\rho$ denote the diameter of $K$. The {\em width} $w(R)$ (respectively, {\em height} $h(R)$) of a rectangle $R$ is the length of its short (respectively, long) sides. (If $R$ is a square, then $w(R)=h(R)$). We recall that the {\em minor axis} of a rectangle is the line that passes through the center of each long side. We say that the {\em inclination} of a rectangle is the angle in $[0,\pi)$ that the intersection of its minor axis with the upper halfplane makes with the $x$-axis. (If the minor axis is parallel to the $x$-axis, we let its inclination be $0$). Finally, let $\theta_0:= \epsilon(2+4\epsilon)\log{n}/(4\rho^2 n)$.

Proposition~\ref{prop:workhorse} is an immediate consequence of Claims A and B below. 

\bigskip
\noindent{\bf Claim A.} {\em Every rectangle of area $(2+4\epsilon)\log{n}/n$ contained in $K$ contains a rectangle of area $(2+2\epsilon)\log{n}/n$ whose inclination is $t\cdot \theta_0$ for some integer $t\in [0,\pi/\theta_0)$.}

\bigskip
\noindent{\bf Claim B.} {\em There is a family $\rr$ of rectangles with the following properties: (i) each $R\in\rr$ has area $(2+\epsilon)\log{n}/n$; (ii) $|\rr|=O(n^2)$; and (iii) every rectangle contained in $K$ of area $(2+2\epsilon)\log{n}/n$, whose inclination is $t\theta_0$ for some integer $t\in [0,\pi/\theta_0)$, contains a rectangle in $\rr$.}

\begin{proof}[Proof of Claim A]
Let $R$ be a rectangle of area $(2+4\epsilon)\log{n}/n$ contained in $K$. Let $a,b,c,d$ be the vertices of $R$, in the clockwise cyclic order in which they appear as we traverse the rectangle, so that $ab$ and $cd$ are the long sides of $R$. See Figure~\ref{fig:rec01}. Let $\ell_{ab}, \ell_{bc}, \ell_{cd}, \ell_{da}$ be the lines that span the sides $ab, bc, cd$, and $da$, respectively. We rotate $\ell_{ab}$ clockwise around $a$ until (for the first time) a line perpendicular to the rotating line reaches an inclination of $t\theta_0$, for some integer $t\in[0,\pi/\theta_0)$. Let $\phi\le \theta_0$ denote the angle that the line $\ell_{ab}$ got rotated, and let $a'$ be the point in which the rotated line intersects the side $bc$. Proceed similarly with the lines $\ell_{bc}, \ell_{cd}$, and $\ell_{da}$, to define points $b', c'$, and $d'$, respectively. We refer the reader again to Figure~\ref{fig:rec01}.

\begin{figure}
\scalebox{0.8}{\input{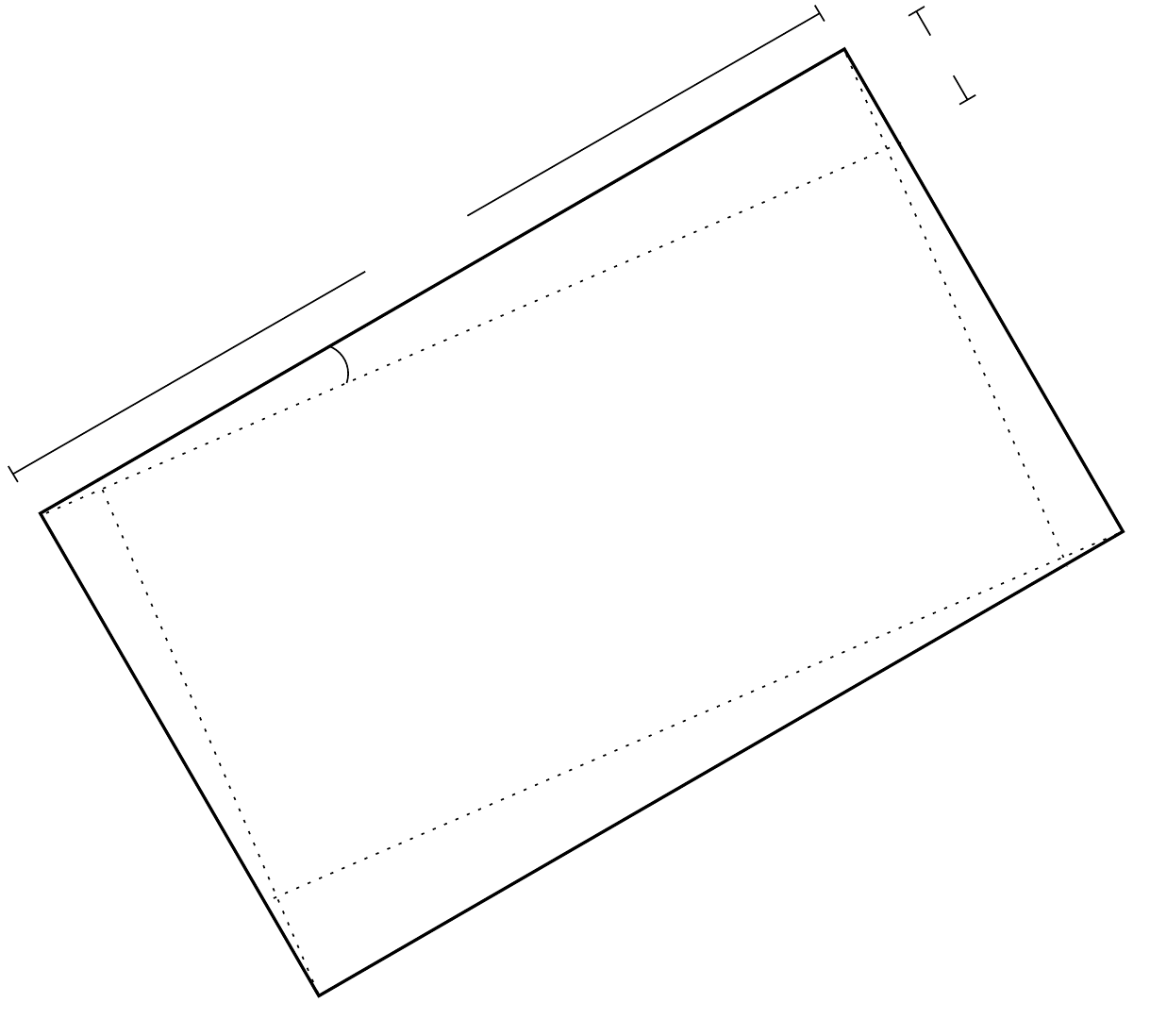_t}}
\caption{Finding the rectangle $R'$ inscribed in $R$ (proof of Claim A).}
\label{fig:rec01}
\end{figure}

The points in which the segments $aa', bb', cc', dd'$ intersect (each of $aa'$ and $cc'$ intersects each of $bb'$ and $dd'$) define a rectangle $R'$ inscribed in $R$. Note that the inclination of $R'$ satisfies the condition in Claim A. It remains to show that $\area(R') \ge (2+2\epsilon)\log{n}/n$. 

We start by noting that $\area(R')$ is clearly at least $\area(R)$ minus the sums of the areas of the triangles $aa'b, bb'c, cc'd$, and $dd'a$. Since $\area(aa'b)=\area(cc'd)\ge\area(bb'c)=\area(dd'a)$, it follows that $\area(R')\ge \area(R)-4\area(aa'b)$. Let $|a'b|$ denote the length of the segment $a'b$. Then $|a'b| =\tan(\phi)\cdot h(R) < \tan(\theta_0)\cdot h(R) <  \theta_0 \cdot h(R) = (\epsilon(2+4\epsilon)\log{n}/(4\rho^2 n)) h(R)$. Thus $\area(aa'b)=|a'b|h(R)/2\le \bigl(\epsilon(2+4\epsilon)\log{n}/(8\rho^2 n)\bigr)\cdot(h(R))^2$. 

Note that since $R\subseteq K$ and $\area(R)=(2+4\epsilon)\log{n}/n$, then the aspect ratio $w(R)/h(R)$ attains its minimum value when $h(R)=\rho$ (and so $w(R)=(2+4\epsilon)\log{n}/(\rho n)$). Thus $(2+4\epsilon)\log{n}/(\rho^2 n) \le w(R)/h(R)$. Therefore $\area(aa'b)\le \epsilon\cdot w(R)h(R)/8=(\epsilon/8)\area(R)$. Since $\area(R')\ge \area(R)-4\area(aa'b)$, then $\area(R') \ge \area(R)(1-\epsilon/2)=(2+4\epsilon)(1-\epsilon/2)\log{n}/n$. Since this last expression is greater than $(2+2\epsilon)\log{n}/n$ for all sufficiently small $\epsilon$, the claim follows.
\end{proof}

\begin{proof}[Proof of Claim B]
Let $\gamma:=\bigl(\frac{2+2\epsilon}{2+\epsilon}\bigr)^{1/3}$. Let $w_0:=(2+2\epsilon)\log{n}/(\rho\cdot n)$. Note that, since $\rho$ is the diameter of $K$, then $w_0$ is the smallest possible width of a rectangle of area $(2+2\epsilon)\log{n}/n$ contained in $K$. On the other hand, the largest possible width of a rectangle of area $(2+2\epsilon)\log{n}/n$ contained in $K$ is $\sqrt{(2+2\epsilon)\log{n}/n}$. Let $M$ be the smallest integer $m$ such that $\gamma^m w_0 \ge \sqrt{(2+2\epsilon)\log{n}/n}$. We note for future reference that since $\gamma^{M -1}w_0 < \sqrt{(2+2\epsilon)\log{n}/n}$, a simple calculation shows that $M=O(\log{n})$.

For the rest of the proof we regard points in the plane as column vectors.

Let $\Delta_x^m:= \gamma^m(\gamma-1)w_0/2$ and $\Delta_y^m:= \frac{\rho(\gamma-1)}{2\gamma^{m+3}}$. For each integer $t\in[0,\pi/\theta_0)$ we let $A^t$ be the rotation matrix $\biggl(\begin{matrix}\cos (t\theta_0) & -\sin (t\theta_0) \\\sin(t\theta_0) & \cos(t\theta_0) \\\end{matrix}\biggr)$.

Now for each $-1 \le m \le M-2$, and each integer $t\in[0,\pi/\theta_0)$, we define the set (or {\em grid}) $$G^{m,t}:=\biggl\{ A^t \biggl(\begin{matrix}
i\Delta_x^m \\
j\Delta_y^m \\
\end{matrix}
\biggr)\biggl  | i,j \in \zz \biggr\} \ \bigcap K,$$ and let $\rr^{m,t}$ be the set of those rectangles $R$ contained in $K$ such that: (i) the center of $R$ is in $G^{m,t}$; (ii) the inclination of $R$ is $t\theta_0$; (iii) $w(R) = \gamma^m w_0$; (iv) $h(R) = \rho/\gamma^{m+3}$.

Now define $$\rr:=\bigcup_{m,t} \rr^{m,t},$$ where the union is over all $m\in \{-1,0,\ldots,M-2\}$ and all integers $t\in [0,\pi/\theta_0)$.

We claim that $\rr$ satisfies the properties in Claim B. 

Property (i) is trivial: every $R\in\rr$ has area $w_0\rho/\gamma^3=(2+\epsilon)\log{n}/n$. 

To prove (ii), in order to bound $|\rr|$ we first estimate $|\rr^{m,t}|$ for any two fixed integers $m \in \{-1,0,\ldots,M-2\}, t\in [0,\pi/\theta_0)$. Let $H^{m,t}$ be the set of points in $G^{m,t}$ that are centers of rectangles in $\rr^{m,t}$. Note that $|H^{m,t}|=|R^{m,t}|$. 

Now consider the map $g$ that sends each point $A^t\biggl(\begin{matrix}x \\y \\\end{matrix}\biggr)$ in $H^{m,t}$ to the {\em grid rectangle} with vertices $A^t \biggl(\begin{matrix}
x \\
y \\
\end{matrix}
\biggr)
$, $A^t \biggl(\begin{matrix}
x+\Delta_x^m \\
y \\
\end{matrix}
\biggr)
$, $A^t \biggl(\begin{matrix}
x \\
y+\Delta_y^m \\
\end{matrix}
\biggr)
$, and $A^t \biggl(\begin{matrix}
x+\Delta_x^m \\
y+\Delta_y^m \\
\end{matrix}
\biggr).
$
Since every $A^t\biggl(\begin{matrix}
x \\
y \\
\end{matrix}
\biggr)\in H^{m,t}$ is the center of a (much larger) rectangle (in $\rr$) contained in $K$, it follows that the grid rectangle $g\bigl(A^t\biggl(\begin{matrix}
x \\
y \\
\end{matrix}
\biggr)\bigr)$ is also contained in $K$. Moreover, the interiors of any two distinct such rectangles that are images of $g$ are disjoint. Each grid rectangle has area $\Delta_x^m\Delta_y^m$.  It follows that the number of grid rectangles that are images under $g$ (and thus also $|H^{m,t}|$ and $|R^{m,t}|$) is at most $\area(K)/(\Delta_x^m\Delta_y^m) = 1/(\Delta_x^m\Delta_y^m) = (4\gamma^3)/((\gamma-1)^2\rho w_0)= 4\gamma^3n/((2+2\epsilon)(\gamma-1)^2\log{n})$. 

Thus for every $m\in\{-1,0,\ldots,M-2\}$ and every integer $t\in[0,\pi/\theta_0)$ we have $|\rr^{m,t}| =O(n/\log{n})$. Since $M = O(\log{n})$ and $\pi/\theta_0 =(4\pi\rho^2 n)/(\epsilon(2+4\epsilon)\log{n})=O(n/\log{n})$, then $|\rr|= (M \cdot \lceil{\pi/\theta_0}\rceil) |\rr_{m,t}| = O(n^2)$. This proves (ii).

To prove (iii), let $Q$ be a rectangle contained in $K$, of area $(2+2\epsilon)\log{n}/n$, whose inclination is $t\theta_0$ for some integer $t\in[0,\pi/\theta_0)$. Since $w_0$ is the smallest possible width of $Q$, and $\sqrt{(2+2\epsilon)\log{n}/n}\le \gamma^M w_0$ is the largest possible width of $Q$, it follows that there is an integer $m$ in $\{-1,0,\ldots,M-2\}$ such that $w(Q)\in [\gamma^{m+1}w_0,\gamma^{m+2}w_0]$. Since $\area(Q)=w(Q)h(Q)=\rho w_0$, it follows that $h(Q)\in [\frac{\rho}{\gamma^{m+2}},\frac{\rho}{\gamma^{m+1}}]$.

Now let $(x,y)$ be the point in $G^{m,t}$ that is closest to the center of $Q$, and let $R$ be the rectangle in $\rr^{m,t}$ centered at $(x,y)$. Since $w(Q) \ge \gamma^{m+1} w_0$, it follows that the distance from $(x,y)$ to each long side of $Q$ is at least $\gamma^{m+1} w_0/2 - \Delta_x^m = \gamma^m w_0/2$. Similarly, since $h(Q) \rho/\gamma^{m+2}$, it follows that the distance from $(x,y)$ to each short side of $Q$ is at least $\rho/2\gamma^{m+2} - \Delta_y^m = \rho/2\gamma^{m+3}$. Since $w(R) = \gamma^m w_0$ and $h(R) = \rho/\gamma^{m+3}$, it follows that $R\subseteq Q$.
\end{proof}

\end{proof}

\section{Proof of Theorem~\ref{thm:maintheorem3}}\label{sec:main3}

\begin{proof}
It suffices to prove the lower bound, since the upper bound for $\polmax(K_n)$ follows at once from Theorem~\ref{thm:maintheorem2}.

We prove the lower bound for the case in which $K$ is the unit square. As we explain at the end of this proof, the ideas in the proof carry over in a straightforward way to the general case. We prefer to focus on the case in which $K$ is a square since the main ideas will not be hidden behind the necessarily more technical details required in the general case.

We will prove the following. Let $\delta, \epsilon>0$. Then w.h.p.~there is an empty convex quadrilateral of area at least $(1-2\delta)(1-\epsilon)\log{n}/n$. This clearly implies the required lower bound.



Let $t:=1/((1-\epsilon)\log{n}/n)$. For simplicity we assume that $t$ is an integer. We partition $K$ into $t$ consecutive rectangles (or {\em strips}) $s_1, s_2, \ldots, s_t$, each of width $n/t$ and height $1$. It follows from Proposition~\ref{prop:minusepsilon} that the probability that there are fewer than $n^\epsilon/(2(1-\epsilon)\log{n})$ empty strips is at most $\frac{4(1-\epsilon) \log(n)}{n^{\epsilon}}$. 

We now estimate the probability that there exist two consecutive strips that are empty. The combined area of any two consecutive strips is $2(1-\epsilon)\log{n}/n$, and so the probability that it is empty is $\sim n^{-2-2\epsilon}$. There are $t-1$ pairs of consecutive strips, and so it follows from the union bound that the probability that one such consecutive pair is empty is smaller than $t\cdot n^{-2-2\epsilon} = n^{-1-\epsilon}/((1-\epsilon)\log{n})$.

Let $e_1, e_2, \ldots, e_p$ denote the empty strips, labeled so that $e_{i+1}$ is to the right of $e_{i}$ for $i=1,\ldots,p-1$. Suppose for simplicity that $p=4q+2$ for some integer $q$. Now for $j=1,2,\ldots, q$, let $l^j_1, l^j_2$ (respectively, $r^j_1, r^j_2$) be the points with the largest (respectively, smallest) $x$-coordinates that are to the left (respectively, to the right) of the strip $e_{4j}$. 



Let us call $A$ the event that for all $j\neq k$, the sets $\{l^j_1, l^j_2, r^j_1, r^j_2\}$ and $\{l^k_1, l^k_2, r^k_1, r^k_2\}$ are disjoint (the probability that $A$ does {\em not} occur is at most the probability that one consecutive pair of strips is empty, that is, smaller than $n^{-1-\epsilon}/((1-\epsilon)\log{n})$). Since the $y$-coordinates of the points in $K_n$ are independent of their $x$-coordinates, it follows that for each $j=1,\ldots,q$, if $A$ occurs then the following occurs with probability $\delta^4$: the points $r^j_1$ and $l^j_1$ have $y$-coordinates larger than $1-\delta$, and the points $r^j_1$ and $l^j_1$ have $y$-coordinates smaller than $\delta$. Therefore for each such $j$, with probability $\delta^4$ the points $l^j_1, l^j_2, r^j_1$, and $r^j_2$ form an empty convex quadrilateral of area at least $(1-2\delta)\cdot (1-\epsilon)\log{n}/n$. 

Thus (i) w.h.p.~the number $p$ of empty strips is at least $n^\epsilon/(2(1-\epsilon)\log{n})$, and so w.h.p.~$q=(p-2)/4$ is $\Omega(n^\epsilon/\log{n})$; (ii) w.h.p.~the event $A$ occurs; and (iii) if $A$ occurs, then for each $j=1,2,\ldots,q$, with probability $\delta^4$ the points $l^j_1, l^j_2, r^j_1$, and $r^j_2$ form an empty convex quadrilateral of area at least $(1-2\delta)\cdot (1-\epsilon)\log{n}/n$. Clearly (i), (ii), and (iii) combine to prove that w.h.p.~there is an empty convex quadrilateral of area at least $(1-2\delta)\cdot (1-\epsilon)\log{n}/n$, as claimed.

For the general case, very few adaptations of substance are needed. For instance, one may start by approximating $K$ with a convex set $T$ contained in $K$ whose boundary is smooth, and whose area is arbitrarily close to the area of $K$ (the smoothness of the boundary of $T$ is not necessary, but it simplifies somewhat the ensuing discussion). Then, as in the unit square case, one partitions $T$ into $t$ vertical strips of equal area (thus each vertical strip is bounded by two vertical segments and by two pieces of the boundary of $T$). All the arguments from the unit square carry over so far to this case: the estimates for the number of empty strips and the probability that two of them are consecutive are the same. The only technical complication arises when one needs to do the equivalent step of choosing points $r_1^j$ and $l^j_1$ with large $y$-coordinates (and points $r_2^j$ and $l_2^j$ with small $y$-coordinates): in this general case, one needs to define, for each empty strip, small regions at the top and at the bottom of its neighboring strips (each of these small regions must have area $\delta\cdot (1-\epsilon)\log{n}/n$, as in the unit square case). Taking care rigorously of the details is of course not a deep difficulty (here is where the smoothness of the boundary of $T$ comes handy), but the simplicity of the basic ideas seems much more apparent by focusing on the unit square case.
\end{proof}

\section{Concluding Remarks}\label{sec:conrem}

We conjecture that in both Theorems~\ref{thm:maintheorem2} and~\ref{thm:maintheorem3} the upper bounds should match the lower bounds $(1+o(1))\log{n}/n$, as in Theorem~\ref{thm:maintheorem1}.

It is worth noting that in none of the main theorems we make full use of the convexity assumption for $K$ and $L$. Actually, all these results hold as long as $K$ is (for instance) a finite union of convex sets. The convexity of $L$ is only required so that we can invoke Proposition~\ref{prop:lassak}, but for any fixed (not necessarily convex) bounded $L$ one can obviously find inscribed and circumscribed rectangles that approximate its area within constant factors. Thus the proofs of the main theorems can be adapted to prove the following:

\begin{thm}\label{thm:general}
Let $K, L$ be sets in the plane, where $K$ is the finite union of convex sets and $L$ is bounded. Suppose for normalization purposes that the area of $K$ is $1$. Let $K_n$ be a set of $n$ points chosen independently and uniformly at random from $K$. Let $\themax_L(K_n)$ denote the random variable that measures the largest (in terms of area) hole that is homothetic to $L$; let $\themax(K_n)$ denote the random variable that measures the largest (in terms of area) convex hole; and let $\polmax(K_n)$ denote the random variable that measures the largest (in terms of area) convex polygon with vertices in $K_n$.  Then w.h.p. 
\begin{align*}
{\themax_L}(K_n) &= \Theta\biggl(\frac{\log{n}}{n}\biggr),\\
{\themax}(K_n) &= \Theta\biggl(\frac{\log{n}}{n}\biggr), \text{\hglue 0.4 cm and}\\
{\polmax}(K_n) &= \Theta\biggl(\frac{\log{n}}{n}\biggr).
\end{align*}
\end{thm}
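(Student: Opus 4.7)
The plan is to inspect the proofs of Theorems~\ref{thm:maintheorem1}--\ref{thm:maintheorem3} and verify that convexity of $K$ enters only via Proposition~\ref{prop:appro} and convexity of $L$ only via Proposition~\ref{prop:lassak}. Both hypotheses can be relaxed at the cost of inflating constants, which is harmless since we only target the $\Theta(\log n/n)$ order of magnitude.

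To handle $K = K_1 \cup \cdots \cup K_r$ with each $K_i$ convex, I would restrict attention to a piece $K_{i_0}$ of area at least $1/r$. A Chernoff bound shows that the number $n'$ of points of $K_n$ landing in $K_{i_0}$ is w.h.p.~$\Theta(n)$, and conditional on $n'$ they are i.i.d.~uniform on $K_{i_0}$. After rescaling $K_{i_0}$ to unit area, the original lower-bound arguments of Sections~\ref{sec:main1}--\ref{sec:main3} apply verbatim (the strip argument for $\polmax$ only requires a convex domain, as the paper itself notes) with $n'$ in place of $n$, producing a hole, convex hole, or empty convex polygon of area $\Theta(\log n'/n') = \Theta(\log n/n)$ in the rescaled picture; rescaling back multiplies areas by the constant $\area(K_{i_0})$ and preserves the order. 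For the upper bounds, the proof of Proposition~\ref{prop:workhorse} uses only the diameter and the area of $K$, not its convexity, so Theorem~\ref{thm:maintheorem4} and hence the upper bounds for $\themax$ and $\polmax$ apply unchanged; the upper bound argument for $\themax_L$ in part~(2) of Theorem~\ref{thm:maintheorem1} is analogous.

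For the generalization from convex to bounded $L$ (relevant only to $\themax_L$), I would replace Lassak's circumscribed rectangle by the axis-aligned bounding box $R$ of $L$, for which $\area(R) \le C \cdot \area(L)$ with $C = C(L)$ a constant (assuming, as the paper's remark implicitly requires, that $L$ has non-empty interior, which is needed for any notion of an inscribed rectangle of positive area to make sense). Using this $R$ in Proposition~\ref{prop:appro} applied to the rescaled $K_{i_0}$ and choosing a target area $c\log n/n$ with $c < 1/C$, one obtains a partition into $\Theta(n/\log n)$ equal-area regions, at least $2/3$ of them homothetic to $R$. By Proposition~\ref{prop:minusepsilon}, many such regions are empty w.h.p., and each empty region homothetic to $R$ contains an empty homothetic copy of $L$ of area $c\log n/n$, which is the desired lower bound. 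The same substitution works for the upper bound of Theorem~\ref{thm:maintheorem1}, and that upper bound in turn supplies the $O(\log n/n)$ upper bounds for $\themax$ and $\polmax$ (after composing with Proposition~\ref{prop:lassak}, which applies to convex holes regardless of the shape of $K$).

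The main obstacle, such as it is, is bookkeeping: confirming that conditioning on $|K_n \cap K_{i_0}| = n'$ preserves the independence invoked in Proposition~\ref{binsandballs}, and pinning down the implicit regularity class in which $L$ is allowed to live so that $C(L)$ is finite. Neither is a deep difficulty, but both deserve to be stated rather than swept under ``obviously''.
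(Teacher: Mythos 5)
Your proposal is correct and follows essentially the same route as the paper, which justifies Theorem~\ref{thm:general} only by the concluding remark that convexity of $K$ is never fully used and that convexity of $L$ enters only through Proposition~\ref{prop:lassak}, replaceable for any bounded $L$ of positive area by inscribed and circumscribed rectangles approximating its area within constant factors. Your write-up (restricting to a single convex piece $K_{i_0}$ via a Chernoff bound for the lower bounds, and observing that the upper-bound machinery of Proposition~\ref{prop:workhorse} uses only the area and diameter of $K$) in fact supplies more detail than the paper itself does.
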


We also note that all these results and their proofs carry over easily to the $d$-dimensional case, although the multiplicative constants one obtains depend on $d$. 

\section*{Acknowledgments}

We thank Gerardo Arizmendi and Edgardo Ugalde for stimulating discussions.

\end{document}